\title{D-tensor paraproducts and its caricatures}
\author{Oluwadamilola Fasina}
\date{January 2026}
\newtheorem{theorem}{Theorem}[section]
\newtheorem{lemma}[theorem]{Lemma}
\newtheorem{corollary}[theorem]{Corollary}
\newtheorem{definition}[theorem]{Definition}
\begin{document}

\maketitle

\begin{abstract}
We generalize the $2$-tensor paraproduct decomposition result of  \cite{fasina2025quasilinearization} to $d$-tensors. In particular, we show that for $A \in C^{d}(\mathbb{R}), f \in \Lambda_{\alpha}([0,1]^d)$, $A(f)$ can be approximated by   $\tilde{A}_{(N_i)_{i=0}^d}(f) = (\sum_{\beta=1}^d A^{\beta}(P^{j_1,j_2, \ldots, j_d}(f)) \tilde{\mathbf{v}}^{\beta}(f) ) $ with the residual $\Delta_{(N_i)_{i=1}^d}(A,f) = \tilde{A}_{(N_i)_{i=1}^d}(f) - A(f) \in \Lambda_{2\alpha}([0,1]^d)$. Our theoretical findings are supported by a computational example for d=3.

\end{abstract}

\section{Introduction}

We generalize the 2-tensor paraproduct decomposition developed in \cite{fasina2025quasilinearization}, which built on Bony's seminal work, \cite{bony1981calcul}, to $d$-tensors for the situation $(f \in \Lambda_{\alpha}([0,1]^d), A \in C^d(\mathbb{R}))$; in particular, we prove the following theorem:

\begin{theorem}
Suppose  $A \in C^d(\mathbb{R})$, $ f \in  \Lambda_\alpha([0,1]^d), 0 < \alpha < \frac{1}{2}$, then for the operator $T: f \to A(f)$ we can approximate $A(f)$ with

\begin{align}
&  \tilde{A}_{(N_i)_{i=0}^d}(f) =  \sum_{j_1, \ldots, j_d=0}^{N_1,\ldots,N_d} \sum_{\beta=1}^d A^{\beta}(P^{j_1,j_2, \ldots, j_d}(f)) \tilde{\mathbf{v}}^{\beta}(f) \in \Lambda_{\alpha}([0,1]^d)
\label{eq:2}
\end{align}

such that the multiscale tensor paraproduct transforms $T : f \to A(f)$ to 

\begin{align}
\Pi^{(N_i)_{i=1}^d}_{(A^\beta)_{\beta=1}^d} : f \to \tilde{A}_{(N_i)_{i=0}^d}(f) + \Delta_{(N_i)_{i=0}^d}(A,f)
\label{eq:3}
\end{align}

where $\Delta_{(N_i)_{i=0}^d}(A,f) = A(f) - \tilde{A}_{(N_i)_{i=0}^d}(f) \in \Lambda_{2\alpha}([0,1]^2)$ is the residual containing twice the regularity of $f$.

\end{theorem}

 Furthermore, examinations of the organs of $\Pi^{(N_i)_{i=1}^d}_{(A^\beta)_{\beta=1}^d} $ leads to caricatures highlighted herein - namely, that (a) $\tilde{A}_{(N_i)_{i=0}^d}(f)$ is a taylor expansion of $A$ in terms of the derivatives of $A$ and scaling parameters of the wavelet operators and (b) the 3-tensor paraproduct decomposition.

\section{Acknowledgments}

The author wishes to thank Ronald R. Coifman for useful discussions.

\section{Preliminaries}

Included in this section are the relevant definitions needed to build $\Pi^{(N_i)_{i=1}^d}_{(A^\beta)_{\beta=1}^d}$:

\begin{definition}
Dyadic distance along direction $i$:

\begin{align}
d_d(x'_i, x_i) = \inf_{j_i,k_i} 
\begin{cases} 
|I^{j_i}_{k_i}| &  x'_i , x_i \in I^{j_i}_{k_i}   \\
0, & x'_i = x_i
\end{cases}
\label{eq:7}
\end{align}

where $I^{j_i}_{k_i} = (2^{j_i}k_i, 2^{j_i}(k_i+1)]$ is the dyadic interval at scale $j_i$ at location $k_i$.

\end{definition}

\begin{definition}
The 1D scaling and wavelet functions have the usual definitions:

\begin{align}
\phi(x) \coloneq 
\begin{cases}
 1 & \text{if } x \in (0,1] \\
 0 & \text{if } x \notin (0,1]
\end{cases}
\qquad
\psi(x) \coloneq
\begin{cases}
 1 & \text{if } 0 < x \leq \frac{1}{2} \\
 -1 & \text{if }  \frac{1}{2} < x\leq 1
\end{cases}
\end{align}

\begin{align}
\phi^j_k(x) \coloneq 2^{\frac{-j}{2}} \phi(2^{-j}x - k), j,k \in \mathbb{N} 
\qquad
\psi^j_k(x) \coloneq 2^{\frac{-j}{2}} \psi(2^{-j}x - k), j,k \in \mathbb{N} 
\end{align}
\end{definition}

\begin{definition} 
We define the $d$-dimensional scaling and wavelet functions as the cartesian product of $d$ one-dimensional scaling functions and wavelet functions, respectively.

\begin{align}
\phi^j_k(x_1,x_2, \ldots, x_d) \coloneq \phi^j_k(x_1) \times \phi^j_k(x_2) \times \ldots \times \phi^j_k(x_d)
\end{align}

\begin{align}
\psi^j_k(x_1,x_2, \ldots, x_d) \coloneq \psi^j_k(x_1) \times \psi^j_k(x_2) \times \ldots \times \psi^j_k(x_d)
\end{align}

\end{definition}

\begin{definition} Let $\delta_{x_1} f(x_1,x_2, \ldots, x_d) = f(x'_1,x_2, \ldots, x_d) - f(x_1,x_2, \ldots, x_d)$ such that $\delta_{x_1}$ is the difference operator in direction $x_1$ We say a function is mixed $\alpha$-H\"older if the following conditions are satisfied:

\begin{align}
\delta_{x_d} \delta_{x_{d-1}} \ldots \delta_{x_1} f(x_1,x_2, \ldots, x_d) \leq C (d_d(x'_1,x_1)^{\alpha}d_d(x'_2,x_2)^{\alpha} \ldots d_d(x'_d,x_d)^{\alpha}), 0 < \alpha < \frac{1}{2}
\end{align}

\begin{align}
\delta_{x_i} f(x_1,x_2, \ldots, x_d) \leq C d_d(x'_i,x_i)^{\alpha} \hspace{0.2 cm} , \hspace{0.2 cm} i=1,\ldots, d
\end{align}

for $C, \alpha > 0$. We denote the space of functions satisfying this condition as $\Lambda_{\alpha}([0,1]^d)$

\end{definition}

We now define the averaging operator acting on a function supported on a $d$-dimensional box. Let $I_1 \times I_2 \times \ldots \times I_d = [0,1]^d$. Along each direction, $i$, we construct $j_i=0,1,2, \ldots, N$ scales such that at the $j_i$th scale one has $k_i=1,2, \ldots, 2^{j}$ intervals. Let the dyadic interval along direction $i$ be denoted by: $I^{j_i}_{k_i} = (2^{j_i}k_i, 2^{j_i}(k_i+1)]$ and let $\chi^{j_i}_{k_i}(x_i)$ as the characteristic function associated with interval $I^{j_i}_{k_i}$.

\begin{definition}
Let the averaging operator, $P^{j_1,j_2, \ldots, j_d}$, be understood with the relation:

\begin{align}
&P^{j_1,j_2, \ldots, j_d}(f) \coloneq \sum_{k_1 = 1}^{2^{j_1}} \sum_{k_2 = 1}^{2^{j_2}} \ldots \sum_{k_d = 1}^{2^{j_d}}   P^{j_1,j_2, \ldots, j_d}_{k_1,k_2, \ldots, k_d}(f) \nonumber \\
& = \sum_{k_1 = 1}^{2^{j_1}} \sum_{k_2 = 1}^{2^{j_2}} \ldots \sum_{k_d = 1}^{2^{j_d}}  \frac{1}{|I^{j_1}_{k_1} | \times |I^{j_2}_{k_2} | \times \ldots \times |I^{j_d}_{k_d} |}( \int_{|I^{j_1}_{k_1} |} \int_{|I^{j_2}_{k_2} |} \ldots \int_{|I^{j_d}_{k_d} |} \nonumber \\
& \phi(y_1, y_2, \ldots, y_d)f(y_1, y_2, \ldots, y_d) dy_1 dy_2, \ldots dy_d) \chi^{j_1}_{k_1}(x_1) \times \chi^{j_2}_{k_2}(x_2) \times \ldots \times \chi^{j_d}_{k_d}(x_d)
\end{align}
\label{defn:avg_op}
\end{definition}

\section{d-tensor paraproducts}

Included in this section is a statement and proof of our main result:

\begin{theorem}
Suppose  $A \in C^d(\mathbb{R})$, $ f \in  \Lambda_\alpha([0,1]^d), 0 < \alpha < \frac{1}{2}$, then for the operator $T: f \to A(f)$ we can approximate $A(f)$ with

\begin{align}
&  \tilde{A}_{(N_i)_{i=0}^d}(f) =  \sum_{j_1, \ldots, j_d=0}^{N_1,\ldots,N_d} \sum_{\beta=1}^d A^{\beta}(P^{j_1,j_2, \ldots, j_d}(f)) \tilde{\mathbf{v}}^{\beta}(f) \in \Lambda_{\alpha}([0,1]^d)
\label{eq:2}
\end{align}

such that the multiscale tensor paraproduct transforms $T : f \to A(f)$ to 

\begin{align}
\Pi^{(N_i)_{i=1}^d}_{(A^\beta)_{\beta=1}^d} : f \to \tilde{A}_{(N_i)_{i=0}^d}(f) + \Delta_{(N_i)_{i=0}^d}(A,f)
\label{eq:3}
\end{align}

where $\Delta_{(N_i)_{i=0}^d}(A,f) = A(f) - \tilde{A}_{(N_i)_{i=0}^d}(f) \in \Lambda_{2\alpha}([0,1]^2)$ is the residual containing twice the regularity of $f$.

\label{thm:4.1}
\end{theorem}

\begin{proof}

We first prove a short lemma which establishes $A(f)$ can be expanded into a telescoping series with respect the mixed difference of the scales of the averaging operator. 

\begin{lemma} Let $A \in C^{d}(\mathbb{R})$ and $f \in \Lambda_{\alpha}([0,1]^d)$. Define $Q^{j_i}$ as the finite difference operator between scales of the averaging operator:

\begin{align}
& Q^{j_i}(f) \coloneq \sum_{ \tilde{k}_i=1_i}^{2^{j_i}} P_{k_i}^{{j_i} + 1}(f) - \sum_{k_i=1_i}^{2^{j_i} + 1} P_{k_i}^{j_i}(f) \nonumber \\
& =  \sum_{\tilde{k}_i=1_i}^{2^{j_i}}  \frac{1}{|I^{ {j_i} + 1}_{k_i}| } (\int_{|I^{{j_i} + 1}_{k_i}|} \phi^{{j_i} +  1}_{k_i}(x_1, x_2, \ldots, y_i, \ldots, x_d)  f(x_1, x_2, \ldots, y_i, \ldots, x_d)  dy_i ) \times \chi^{j_i + 1}_{k_i}(x_i) \nonumber \\
&- \sum_{k_i=1_i}^{2^{j_i} }  \frac{1}{|I^{ {j_i} }_{k_i}| }( \int_{|I^{{j_i} }_{k_i}|} \phi^{j_i}_{k_i}(x_1, x_2, \ldots, y_i, \ldots, x_d) f(x_1, x_2, \ldots, y_i, \ldots, x_d)  dy_i ) \times \chi^{j_i}_{k_i}(x_i)  \nonumber \\
& =   \sum_{k_i=1_i}^{2^{j_i} }  \frac{1}{|I^{ {j_i} }_{k_i}| }\int_{|I^{ {j_i} }_{k_i}| } \psi^{j_i}_{k_i}(x_1, x_2, \ldots, y_i, \ldots, x_d) f(x_1, x_2, \ldots, y_i, \ldots, x_d)  dy_i \times \chi^{{j_i} }_{k_i}(x_i) 
\label{eq:18}
\end{align}

where the kernel of $Q^{j_i}$ is the wavelet function in direction $i$. Then the following identity holds:
\begin{align}
& A(f) = \sum_{j_1, \ldots, j_d=0}^{N_1, \ldots, N_d} A(Q^{j_d}Q^{j_{d-1}} \ldots Q^{j_1})(f)
\end{align}

if we assume the average of $f$ over the entire interval in any direction is 0. 

\label{lemma:4}
\end{lemma}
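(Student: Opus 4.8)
The plan is to establish the identity by iterating a one-dimensional telescoping argument, one coordinate direction at a time. First I would record the elementary observation that for a fixed direction $i$, the partial sums of the finite-difference operators telescope: $\sum_{j_i=0_i}^{N_i} Q^{j_i}(g) = P^{N_i+1}_{\,\cdot\,}(g) - P^{0_i}(g)$ in the $i$-th variable, because $Q^{j_i}$ is by construction (see the last line of \eqref{eq:18}, where the difference of two consecutive scaling-function averages is rewritten as a single wavelet average) exactly the difference of averaging operators at consecutive scales. Under the normalization hypothesis that the average of $f$ over the whole interval in direction $i$ vanishes, the term $P^{0_i}(g)$ drops out, so $\sum_{j_i=0_i}^{N_i} Q^{j_i}$ acts in direction $i$ as "reconstruction up to scale $N_i+1$," which on the dyadic grid of depth $N_i$ is (effectively) the identity in that variable.

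The main step is then an induction on $d$ (or equivalently on the number of directions already resolved). Applying the one-dimensional telescoping in direction $1$ writes $A(f)$ as $\sum_{j_1=0_1}^{N_1} A(Q^{j_1})(f)$; here I am using that $A$ commutes with the linear, direction-$1$ operator in the sense needed — more precisely that the finite-difference/averaging structure passes through $A$ because we are telescoping the \emph{argument} of $A$, not $A$ itself, so the identity is really the statement that the argument reconstructs. Then, inside each summand, apply the same telescoping in direction $2$, obtaining a double sum $\sum_{j_1}\sum_{j_2} A(Q^{j_2}Q^{j_1})(f)$, and continue through all $d$ directions. Since the operators $Q^{j_i}$ for distinct $i$ act on disjoint variables, they commute, so the order of composition $Q^{j_d}Q^{j_{d-1}}\cdots Q^{j_1}$ is immaterial and the nested sums may be written in any order, which gives precisely the claimed identity
\[
A(f) = \sum_{j_d=0_d}^{N_d}\sum_{j_{d-1}=0_{d-1}}^{N_{d-1}}\cdots\sum_{j_1=0_1}^{N_1} A\bigl(Q^{j_d}Q^{j_{d-1}}\cdots Q^{j_1}\bigr)(f).
\]

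I expect the main obstacle to be bookkeeping rather than conceptual: one must be careful that "the average of $f$ over the entire interval in any direction is $0$" is exactly the hypothesis that kills the lowest-scale scaling-function term $P^{0_i}$ in each direction, and that the telescoping is done consistently with the index conventions in \eqref{eq:18} (note the mismatched upper limits $2^{j_i}$ versus $2^{j_i}+1$ there, which should be reconciled — this looks like a typo, and the clean statement is $\sum_{k_i} P^{j_i+1}_{k_i} - \sum_{k_i} P^{j_i}_{k_i}$ over the appropriate ranges). A secondary point worth stating explicitly is why applying the direction-$i$ telescoping "inside" $A(\cdot)$ is legitimate: it is, because at each stage we are rewriting the function to which $A$ is applied as a telescoping sum of its own consecutive-scale pieces, and the identity being proved is an identity of functions obtained purely from reassembling these pieces — no linearity of $A$ is used, only the reconstruction property of the averaging operators on the dyadic grid together with the vanishing-mean normalization. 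Once those conventions are pinned down, the induction closes immediately.
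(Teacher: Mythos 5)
There is a genuine gap at the single step that carries all the weight: moving the telescoping sum from inside $A$ to outside. Your argument, as written, establishes only the reconstruction identity $f=\sum_{j_d}\cdots\sum_{j_1} Q^{j_d}Q^{j_{d-1}}\cdots Q^{j_1}(f)$ (using the vanishing of the coarsest-scale averages), and you then assert that this yields $A(f)=\sum_{j_d}\cdots\sum_{j_1} A\bigl(Q^{j_d}\cdots Q^{j_1}\bigr)(f)$ ``because we are telescoping the argument of $A$, not $A$ itself,'' with ``no linearity of $A$ used.'' But if $A\bigl(Q^{j_d}\cdots Q^{j_1}\bigr)(f)$ is read literally as $A$ evaluated at the wavelet piece $Q^{j_d}\cdots Q^{j_1}(f)$, the claimed identity is simply false for nonlinear $A$: already in one variable $\sum_{j}A\bigl(Q^{j}(f)\bigr)\neq A\bigl(\sum_{j}Q^{j}(f)\bigr)$ in general (take $A(x)=x^2$; all cross terms are lost). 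Reconstruction of the argument gives $A(f)=A\bigl(\sum_{\mathbf{j}}Q^{j_d}\cdots Q^{j_1}(f)\bigr)$ and nothing more; interchanging $A$ with the sum is exactly what needs justification, and it cannot come from linearity since $A$ is an arbitrary smooth function.

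The paper closes this gap by a definitional reinterpretation that your proposal never makes: it sets $A(Q^{j_i})(f):=A(P^{j_i+1}(f))-A(P^{j_i}(f))$, i.e., the finite difference acts in the \emph{scale parameter} on the composite map $\mathbf{j}\mapsto A(P^{\mathbf{j}}(f))$, not on $f$ inside $A$; the $d$-fold expression $A\bigl(Q^{j_d}\cdots Q^{j_1}\bigr)(f)$ is then the mixed consecutive-scale difference of $A(P^{j_1,\ldots,j_d}(f))$ in all $d$ directions. With that convention the lemma becomes a telescoping statement about $A\circ P$ itself: summing over $j_i$ collapses to boundary terms, the finest-scale term tending to $A(f)$ and the coarsest-scale term $A(P^{0_i}(f))$ being killed by the zero-average hypothesis (together with $A(0)=0$, which the paper uses implicitly), and the induction over directions then runs exactly as you outline. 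Your bookkeeping remarks (the index typo in the definition of $Q^{j_i}$, commutativity of the $Q^{j_i}$ across directions, the role of the vanishing-mean normalization) are correct and consistent with the paper, but the central mechanism --- telescoping at the level of $A(P^{\mathbf{j}}(f))$ rather than of $f$ --- is missing, and your explicit claim that no such step is needed is precisely where the proposed proof breaks.
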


\begin{proof}
Let $Q^{j_m}(f)$ be the finite difference operator between scales of the averaging operator in direction $m$, where $P^{j_m}(f)$ denotes an average over $f$ in direction $m$.

\begin{align}
& Q^{j_i}(f) = P^{j_i + 1}(f) - P^{j_i}(f) \nonumber \\
& =  \sum_{k_i=0_i}^{2^{N_i} }  \frac{1}{|I^{ {j_i} }_{k_i}| }\int_{|I^{ {j_i} }_{k_i}| } \psi^{j_i}_{k_i}(x_1, x_2, \ldots, y_i, \ldots, x_d) f(x_1, x_2, \ldots, y_i, \ldots, x_d)  dy_i \times \chi^{{j_i} }_{k_i}(x_i) 
\end{align}

Let $A({Q^{j_i}}(f)) = A(P^{j_i + 1}(f)) - A(P^{j_i}(f))$ denote the pointwise application of $A$ on the finite difference operator in direction $i$. The result is established by inducting on the directions the finite difference operator is applied to $f$.

\begin{align}
& \sum_{j_m = 0}^{N_i} A({Q^{j_i}})(f) = \sum_{j_i = 0}^{N_i} A(P^{j_i + 1}(f)) - A(P^{j_i}(f))  \nonumber \\
& = A(P^{N_i + 2}(f)) - A(P^{0_i}(f)) \nonumber   \\
& = A(f)
\end{align}

where the relation in the second line holds by expansion of the telescoping series, and the relation in the third line hold since $A(P^{j_i}(f)) \to A(f)$ as $j_i \to \infty$ and $A(P^{0_i}(f)) = 0 $ by assumption. Now assume this holds for $k$ applications of finite difference operators:

\begin{align}
 \sum_{j_k = 0}^{N_k} \sum_{j_{k-1} = 0}^{N_{k-1}}  \ldots \sum_{j_0 = 0}^{N_0}  A({Q^{j_k}Q^{j_{k-1}} \ldots Q^{j_1}})(f) = A(f)
\end{align}
We show the relation holds for $k+1$ applications:

\begin{align}
 & \sum_{j_{k+1} = 0}^{N_{k+1}} (\sum_{j_k = 0}^{N_k} \sum_{j_{k-1} = 0}^{N_{k-1}}  \ldots \sum_{j_0 = 0}^{N_0}A(Q^{j_{k+1}}Q^{j_k}Q^{j_{k-1}} \ldots Q^{j_1})(f)  \nonumber \\
 & = \sum_{j_{k+1} = 0}^{N_{k+1}}A(Q^{j_{k+1}} ((\sum_{j_k = 0}^{N_k} \sum_{j_{k-1} = 0}^{N_{k-1}}  \ldots \sum_{j_0 = 0}^{N_0}Q^{j_k}Q^{j_{k-1}} \ldots Q^{j_1})(f)) \nonumber \\
 & = \sum_{j_{k+1} = 0}^{N_{k+1}}A(Q^{j_{k+1}}) (f) \nonumber \\
 & = \sum_{j_{k+1}= 0}^{N_{k+1}} A(P^{j_{k+1} + 1}(f)) - A(P^{j_{k+1}}(f))  \nonumber \\
 & = A(P^{N_{k+1} + 2}(f)) - A(P^{0_{k+1}}(f)) \nonumber   \\
 & = A(f)
\end{align}
Since we've shown that if the relation holds for the pointwise application of $A$ to $k$ sequential applications of the mixed difference operator to $f$ it holds for the pointwise application of $k+1$ sequential applications, the result holds for the pointwise application of an arbitrary number, $d$, sequential applications of the mixed difference operator to $f$.
\end{proof}

The fundamental theorem of calculus can be used to connect the identity established in lemma  \ref{lemma:4} to a quasilinearization of $A(f)$, but to do so we need to define a $d$-dimensional interpolation operator. This leads us to the next lemma.

\begin{lemma}
Define $\delta^{t_i}(f) = P^{j_i}(f) + t_i(P^{j_i + 1}(f) - P^{j_i}(f))$ to be an interpolation operator acting at scale $j_i$.  Then the following relation holds:

\begin{align}
\delta^{t_d} \delta^{t_{d-1}} \ldots \delta^{t_1}(f) \left. \right|_{t_d=0}^{t_d=1}  \left. \right|_{t_{d-1}=0}^{t_{d-1}=1} \ldots  \left. \right|_{t_1=0}^{t_1=1} = Q^{j_d}Q^{j_{d-1}} \ldots Q^{j_1}(f) 
\end{align}
\label{lemma:5}
\end{lemma}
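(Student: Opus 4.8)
The plan is to exploit that each $\delta^{t_i}$ is \emph{affine} in its own parameter $t_i$, and that the one-dimensional operators attached to distinct directions commute, so that the $d$-fold composition $\delta^{t_d}\delta^{t_{d-1}}\cdots\delta^{t_1}(f)$ expands as a multilinear polynomial in $(t_1,\dots,t_d)$ whose top-degree coefficient is exactly $Q^{j_d}Q^{j_{d-1}}\cdots Q^{j_1}(f)$. The iterated endpoint evaluation $\big|_{t_d=0}^{t_d=1}\cdots\big|_{t_1=0}^{t_1=1}$ then isolates precisely that coefficient, which is the claim.

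\textbf{Step 1 (rewrite and commutation).} Write $\delta^{t_i}=P^{j_i}+t_iQ^{j_i}$ with $Q^{j_i}=P^{j_i+1}-P^{j_i}$ as in \eqref{eq:18}. Each of $P^{j_i},Q^{j_i}$ is a \emph{linear} operator — a finite linear combination of the averaging operators of Definition \ref{defn:avg_op} — and it acts only on the variable $x_i$, being an integration against a scaling (resp. wavelet) kernel in $y_i$ followed by multiplication by a characteristic function of $x_i$. Hence for $i\neq m$ the operators attached to directions $i$ and $m$ commute: this is Fubini for the iterated integrals together with the fact that the $x_m$-characteristic functions are constant with respect to the $y_i$-integration, which is where the tensor-product structure of $\phi$ and $\psi$ in Definition 3 is used. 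This is the one place that requires genuine verification; the remainder is algebra.

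\textbf{Step 2 (multilinear expansion).} Using linearity and the commutation of Step 1,
\[
\delta^{t_d}\delta^{t_{d-1}}\cdots\delta^{t_1}(f)=\prod_{i=1}^{d}\bigl(P^{j_i}+t_iQ^{j_i}\bigr)(f)=\sum_{S\subseteq\{1,\dots,d\}}\Bigl(\prod_{i\in S}t_i\Bigr)\Bigl(\prod_{i\in S}Q^{j_i}\Bigr)\Bigl(\prod_{i\notin S}P^{j_i}\Bigr)(f),
\]
a sum over the $2^d$ subsets $S$ of directions, the order of factors in each product being immaterial by commutativity.

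\textbf{Step 3 (endpoint evaluation).} For any $g$ affine in $t_i$, the operation $g\big|_{t_i=0}^{t_i=1}=g(t_i{=}1)-g(t_i{=}0)$ returns the coefficient of $t_i$ in $g$, and in particular annihilates every summand independent of $t_i$. Applying $\big|_{t_i=0}^{t_i=1}$ for every $i\in\{1,\dots,d\}$ to the expansion of Step 2 therefore kills each term with $S\neq\{1,\dots,d\}$ (such a term is independent of $t_i$ for some $i\notin S$) and leaves the $S=\{1,\dots,d\}$ term with its factor $t_1\cdots t_d$ removed, namely $\bigl(\prod_{i=1}^{d}Q^{j_i}\bigr)(f)=Q^{j_d}Q^{j_{d-1}}\cdots Q^{j_1}(f)$, which is the right-hand side. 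Equivalently one may induct on $d$: the base case is $\delta^{t_1}(f)\big|_{t_1=0}^{t_1=1}=P^{j_1+1}(f)-P^{j_1}(f)=Q^{j_1}(f)$, and for the inductive step one evaluates $\big|_{t_d=0}^{t_d=1}$ first to pull out the linear operator $Q^{j_d}$, commutes it past the inner operators by Step 1, and applies the inductive hypothesis in directions $1,\dots,d-1$ to $Q^{j_d}(f)$. In either route the main obstacle is the same: establishing that the direction-wise averaging and wavelet operators commute across directions; once that is in hand the lemma is pure multilinear bookkeeping.
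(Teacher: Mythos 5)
Your proposal is correct, but your primary route is genuinely different from the paper's. The paper proves the lemma by induction on the number of interpolation operators applied: the base case $\delta^{t_1}(f)\big|_{t_1=0}^{t_1=1}=P^{j_1+1}(f)-P^{j_1}(f)=Q^{j_1}(f)$, and then, assuming the identity for $k$ operators, it applies $\delta^{t_{k+1}}$ to $Q^{j_k}Q^{j_{k-1}}\cdots Q^{j_1}(f)$ and evaluates at $t_{k+1}=0,1$ to pull out $Q^{j_{k+1}}$ --- essentially the ``equivalent inductive route'' you sketch at the end of Step 3, up to the order in which the endpoint evaluations are performed, which is harmless because each $\delta^{t_i}$ is linear in its argument. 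Your main argument instead expands the composition $\prod_{i}(P^{j_i}+t_iQ^{j_i})$ into $2^d$ terms indexed by subsets $S$ and observes that the iterated difference $\big|_{t_i=0}^{t_i=1}$ extracts the coefficient of $t_1t_2\cdots t_d$. What this buys is transparency: the discarded lower-order terms are exactly the mixed $P$/$Q$ compositions that resurface later in the construction of the operators $\mathbf{v}^m$ and $\tilde{\mathbf{v}}^m$, so your expansion makes visible structure that the paper's induction leaves implicit; the induction, in turn, is shorter and needs no bookkeeping over subsets. One small remark: the cross-direction commutativity you single out as ``the one place that requires genuine verification'' is indeed true (Fubini together with the tensor-product kernels of Definition 3), but it is not actually needed for your expansion --- since each $P^{j_i}$ and $Q^{j_i}$ is linear, you may distribute the composition while keeping the factors in their original order $i=d,d-1,\ldots,1$, and the $S=\{1,\ldots,d\}$ term is then already $Q^{j_d}Q^{j_{d-1}}\cdots Q^{j_1}(f)$; commutativity is only required if you insist on reordering the factors inside each term, or in the inductive variant where you evaluate $t_d$ first.
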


\begin{proof}

This can be proved by inducting on the sequential applications of the interpolation operators. First we show the base case holds:

\begin{align}
&\delta^{t_1}(f) \left. \right|_{t_1=0}^{t_1=1} \coloneq P^{j_1}(f) + t_1(P^{j_1 + 1}(f) - P^{j_1}(f))\left. \right|_{t_1=0}^{t_1=1} \nonumber \\
& = (P^{j_1 + 1}(f) - P^{j_1}(f) \nonumber \\
& = Q^{j_1}(f)
\end{align}

Assume the result holds for $k$ sequential applications of interpolation operators:

\begin{align}
\delta^{t_k} \delta^{t_{k-1}} \ldots \delta^{t_1} (f) \left. \right|_{t_1 = 0}^{t_1 = 1} \left. \right|_{t_{2} = 0}^{t_{2} = 1} \ldots \left. \right|_{t_k = 0}^{t_k = 1} = Q^{j_k}Q^{j_{k-1}} \ldots Q^{j_1}(f)
\end{align}

We can then show the relation holds for $k+1$ sequential applications of interpolation operators:

\begin{align}
& \delta^{t_{k+1}} (\delta^{t_k} \delta^{t_{k-1}} \ldots \delta^{t_0} (f) \left. \right|_{t_1 = 0}^{t_1 = 1} \left. \right|_{t_{2} = 0}^{t_{2} = 1} \ldots \left. \right|_{t_k = 0}^{t_k = 1}) \left. \right|_{t_{k+1}=0}^{t_{k+1} =1} = \delta^{t_{k+1}}(Q^{j_k}Q^{j_{k-1}} \ldots Q^{j_1}(f)) \left. \right|_{t_{k+1}=0}^{t_{k+1}=1} \nonumber  \\
& = P^{j_{k+1}}(Q^{j_k}Q^{j_{k-1}} \ldots Q^{j_1}(f)) + t_{k+1}(P^{j_{k+1}+1}(Q^{j_k}Q^{j_{k-1}} \ldots Q^{j_1}(f)) - P^{j_{k+1}}(Q^{j_k}Q^{j_{k-1}} \ldots Q^{j_1}(f)) ) \left. \right|_{t_{k+1}=0}^{t_{k+1}=1} \nonumber \\
& =  (P^{j_{k+1}+1}(Q^{j_k}Q^{j_{k-1}} \ldots Q^{j_1}(f)) - P^{j_{k+1}}(Q^{j_k}Q^{j_{k-1}} \ldots Q^{j_1}(f)) ) \nonumber \\
& = (Q^{j_{k+1}}Q^{j_{k}} \ldots Q^{j_1}(f))
\end{align}

Since the relation holds for $k+1$ sequential applications of the interpolation operator given the assumption it holds for $k$ sequential applications of the interpolation operator, by induction it holds for $d$ sequential applications of the interpolation operator. Thus, we have:

\begin{align}
\delta^{t_d} \delta^{t_{d-1}} \ldots \delta^{t_1} (f) \left. \right|_{t_1 = 0}^{t_1 = 1} \left. \right|_{t_{2} = 0}^{t_{2} = 1} \ldots \left. \right|_{t_d = 0}^{t_d = 1} = Q^{j_d}Q^{j_{d-1}} \ldots Q^{j_1}(f)
\end{align}

which completes the proof.

\end{proof}

\begin{definition}

Let $\delta^t(f)$ be an interpolation function which interpolates between the scales $(j_1,j_2, \ldots, j_d)$ of the averaging operators with scaling parameters $(t_1,t_2, \ldots, t_d)$ with $0 \leq t_k \leq 1$ for $k=1,2 \ldots, m$:

\begin{align}
\delta^t(f) \coloneq \delta^{t_d} \delta^{t_{d-1}} \ldots \delta^{t_1} (f)
\end{align}

such that lemma \ref{lemma:5} holds

\end{definition}

 From the previous two lemmas and the fundamental theorem of calculus $A(f)$ has an identity which is a summation (with respect to the scaling parameters in all directions) over an integration of the power series comprised of the operators $(\mathbf{v}^1, \mathbf{v}^2, \ldots, \mathbf{v}^d$ with respect to the interpolation parameters in all directions. First, we define the operators $(\mathbf{v}^1, \mathbf{v}^2, \ldots, \mathbf{v}^d)$ which come from differentiating $A(\delta^t(f))$. The elements in $(\mathbf{v}^1,\mathbf{v}^2, \ldots, \mathbf{v}^d)$ are defined such that each power of $\mathbf{v}$ corresponds to the number of independent partial derivatives of $\delta^t(f)$ taken:

\begin{align}
\mathbf{v}^1(f) \coloneq \frac{\partial^d}{\partial t_d \partial t_{d-1} \ldots \partial t_1}(\delta^t(f)) 
\end{align}

\begin{align}
& \mathbf{v}^2(f) \coloneq \frac{\partial^{d-2}}{\partial t_d \partial t_{d-1} \ldots \partial t_3}(\frac{\partial}{\partial t_2}(\delta^t(f))\frac{\partial}{\partial t_1}(\delta^t(f))) + \nonumber \\
 &\frac{\partial^{d-2}}{ \partial t_d \partial t_{d-1} \ldots \partial t_3}(\delta^t(f)) \frac{\partial^2}{\partial t_2 \partial t_1}(\delta^t(f)) + \ldots + \frac{\partial}{\partial t_d}(\delta^t(f)) \frac{\partial^{d-1}}{\partial t_{d-1} \partial t_{d-2} \ldots \partial t_1}(\delta^t(f)) 
\end{align}

\begin{align}
\vdots
\nonumber
\end{align}

\begin{align}
& \mathbf{v}^m(f) \coloneq \frac{\partial^m}{\partial t_d \partial t_{d-1} \ldots \partial t_m} (\delta^t(f)) \frac{\partial}{\partial t_{m-1}}(\delta^t(f)) \ldots \frac{\partial}{\partial t_{1}}(\delta^t(f))  \nonumber \\
 & + \frac{\partial^{m-1}}{\partial t_d \partial t_{d-1} \ldots \partial t_{d-m + 1}} (\delta^t(f))  \frac{\partial^2}{\partial t_{d-m} \partial t_m}(\delta^t(f)) \frac{\partial}{\partial t_{m-1}}(\delta^t(f)) \ldots \frac{\partial}{\partial t_{1}}(\delta^t(f))  + \ldots + \nonumber \\ 
 & \frac{\partial}{\partial t_d}(\delta^t(f)) \frac{\partial^m}{\partial t_{d-1} \partial t_{d-2} \ldots \partial t_{d-m-1}}(\delta^t(f)) \frac{\partial}{\partial t_{d-m-2}}(\delta^t(f)) \ldots \frac{\partial}{\partial t_1} (\delta^t(f))   
\end{align}

\begin{align}
\vdots \nonumber
\end{align}

\begin{align}
& \mathbf{v}^{d-1}(f) \coloneq \frac{\partial^2}{\partial t_d \partial t_{d-1}}(\delta^t(f)) \frac{\partial}{\partial t_{d-2}}(\delta^t(f)) \ldots \frac{\partial}{\partial t_1} (\delta^t(f)) +\frac{\partial}{\partial t_d }(\delta^t(f)) \frac{\partial^2}{\partial t_{d-1} \partial t_{d-2}}(\delta^t(f)) \ldots \frac{\partial}{\partial t_1} (\delta^t(f)) \nonumber \\
 & + \ldots +\frac{\partial}{\partial t_d }(\delta^t(f)) \frac{\partial}{\partial t_{d-1}}(\delta^t(f)) \ldots  \frac{\partial}{\partial t_3}(\delta^t(f))\frac{\partial^2}{\partial t_2 \partial t_1} (\delta^t(f)) 
\end{align}
\begin{align}
\mathbf{v}^d(f) \coloneq \frac{\partial}{\partial t_{d}}(\delta^t(f))\frac{\partial}{\partial t_{d-1}}(\delta^t(f))\ldots \frac{\partial}{\partial t_1}(\delta^t(f)) 
\end{align}

Note that the first, $\mathbf{v}^1$, and last, $\mathbf{v}^d$, operators have one term while the operators are comprised of a series - each term in the series consisting of the number of partial derivatives corresponding to the power of the operator (i.e. the term $\mathbf{v}^m$ has $m$ partial derivatives for each term in its series). The identity for $A(f)$ can compactly be written as:
\begin{align}
& A(f) = \sum_{j_d = 0}^{N_d} \sum_{j_{d-1} = 0}^{N_{d-1}}  \ldots \sum_{j_1 = 0}^{N_1} \int_0^1 \int_0^1 \ldots \int_0^1 A'(\delta^t(f))\mathbf{v}^1 + A''(\delta^t(f))\mathbf{v}^2  +\ldots + \nonumber \\
& A^m(\delta^t(f))\mathbf{v}^m +  \ldots + A^{d-1}(\delta^t(f))\mathbf{v}^{d-1} + A^d(\delta^t(f))\mathbf{v}^d dt_1 dt_2 \ldots dt_d
\end{align}

We can now define the approximation, $\tilde{A}_{(N_i)_{i=0}^d}(f)$,  by replacing the sequence $(\mathbf{v}^1,\mathbf{v}^2 \ldots, \mathbf{v}^d)$ with $(\tilde{\mathbf{v}}^1,\tilde{\mathbf{v}}^2, \ldots , \tilde{\mathbf{v}}^d )$ - each element in the latter sequence is comprised with a wavelet coefficient in the direction of the partial derivative for an element in the former sequence.

\begin{align}
\mathbf{\tilde{v}}^1(f) \coloneq Q^{j_d} Q^{j_{d-1}} \ldots Q^{j_1} (f) 
\end{align}

\begin{align}
& \tilde{\mathbf{v}}^2(f) \coloneq Q^{j_d}Q^{j_{d-1}} \ldots Q^{j_2}(f)Q^{j_1}(f) + Q^{j_d}Q^{j_{d-1}} \ldots Q^{j_3}(f)Q^{j_2}Q^{j_1}(f)+ \ldots \nonumber \\
& + Q^{j_d}(f)Q^{j_{d-1}}Q^{j_{d-2}} \ldots Q^{j_1}(f) 
\end{align}

\begin{align}
\vdots
\nonumber
\end{align}

\begin{align}
& \tilde{\mathbf{v}}^m(f) \coloneq Q^{j_d}Q^{j_{d-1}} \ldots Q^{j_m}(f)Q^{j_{m-1}}(f) \ldots Q^{j_1}(f) \nonumber \\
 & + Q^{j_d}Q^{j_{d-1}} \ldots Q^{j_{d-m+1}}(f)Q^{j_{d-m}}Q^{j_m}(f) \ldots Q^{j_{m-1}}(f)  + \ldots + \nonumber \\ 
 & Q^{j_d}(f)Q^{j_{d-1}}Q^{j_{d-2}} \ldots Q^{j_{d-m-1}}(f)Q^{j_{d-m-2}}(f) \ldots Q^{j_1}(f) 
 \label{eq:38}
\end{align}

\begin{align}
\vdots \nonumber
\end{align}

\begin{align}
& \tilde{\mathbf{v}}^{d-1}(f) \coloneq Q^{j_d}Q^{j_{d-1}}(f)Q^{j_{d-2}}(f) \ldots Q^{j_1}(f) +  Q^{j_d}(f)Q^{j_{d-1}}Q^{j_{d-2}}(f) \ldots Q^{j_1}(f) \nonumber \\
 & + \ldots + Q^{j_d}(f) Q^{j_{d-1}}(f)) \ldots  Q^{j_3}(f)Q^{j_2}Q^{j_1}(f) 
\end{align}
\begin{align}
\tilde{\mathbf{v}}^d(f) \coloneq Q^{j_d}(f)Q^{j_{d-1}}(f) \ldots Q^{j_1}(f)  
\end{align}

Again, note the first, $\tilde{\mathbf{v}}^1$, and last, $\tilde{\mathbf{v}}^d$, operators have one term while the operators, $(\mathbf{v}^2, \ldots, \mathbf{v}^{d-1})$, are comprised of a series each with $d-1$ terms. In this case, each term in the series contains the number of wavelet coefficients corresponding to the power of the operator (i.e. the term $\tilde{\mathbf{v}}^m$ has $m$ wavelet coefficients for each term in its series). Now the following linear combination of  $(\tilde{\mathbf{v}}^1,\tilde{\mathbf{v}}^2, \ldots , \tilde{\mathbf{v}}^d )$ is independent of $t$ and is used to construct the approximation, $\tilde{A}_{(N_i)_{i=0}^d}(f)$:

\begin{align}
& \tilde{A}_{(N_i)_{i=0}^d}(f) \coloneq \sum_{j_d = 0}^{N_d} \sum_{j_{d-1} = 0}^{N_{d-1}}  \ldots \sum_{j_1 = 0}^{N_1} A'(P^{j_1,j_2, \ldots, j_d}(f))\tilde{\mathbf{v}} +  A''(P^{j_1,j_2, \ldots, j_d}(f))\tilde{\mathbf{v}}^2 + \ldots + A^m(P^{j_1,j_2, \ldots, j_d}(f))\tilde{\mathbf{v}}^m \nonumber \\
&+ \ldots + A^d(P^{j_1,j_2, \ldots, j_d}(f))\tilde{\mathbf{v}}^d \nonumber \\
& = \sum_{j_d = 0_d}^{N_d} \sum_{j_{d-1} = 0_{d-1}}^{N_{d-1}}  \ldots \sum_{j_1 = 0_1}^{N_1} (\sum_{\beta=1}^d A^{\beta}(P^{j_1,j_2, \ldots, j_d}(f))\tilde{\mathbf{v}}^{\beta})
\end{align}

Computing the residual, $\Delta_{(N_i)_{i=1}^d}(A,f) \coloneq  A(f) - \tilde{A}_{(N_i)_{i=0}^d}(f)$, yields:
\begin{align}
 & \Delta_{(N_i)_{i=1}^d}(A,f) \coloneq \sum_{j_d = 0}^{N_d} \sum_{j_{d-1} = 0}^{N_{d-1}}  \ldots \sum_{j_1 = 0}^{N_1} \int_0^1 \int_0^1 \ldots \int_0^1 (A'(\delta^t(f))\mathbf{v} -  A'(P^{j_1,j_2, \ldots, j_d}(f))\tilde{\mathbf{v}}) + \nonumber \\
 & (A''(\delta^t(f))\mathbf{v}^2 -  A''(P^{j_1,j_2, \ldots, j_d}(f))\tilde{\mathbf{v}}^2) + \ldots + (A^m(\delta^t(f))\mathbf{v}^m -  A^m(P^{j_1,j_2, \ldots, j_d}(f))\tilde{\mathbf{v}}^m) + \ldots +  \nonumber \\
 & (A^{d-1}(\delta^t(f))\mathbf{v}^{d-1} -  A^{d-1}(P^{j_1,j_2, \ldots, j_d}(f))\tilde{\mathbf{v}}^{d-1}) +(A^d(\delta^t(f))\mathbf{v}^d-  A^d(P^{j_1,j_2, \ldots, j_d}(f))\tilde{\mathbf{v}}^d) dt_1 dt_2 \ldots dt_d \nonumber \\
 & = \sum_{j_d = 0}^{N_d} \sum_{j_{d-1} = 0}^{N_{d-1}}  \ldots \sum_{j_1 = 0}^{N_1} \int_0^1 \int_0^1 \ldots \int_0^1  \sum_{m=1}^d (A^{m}(\delta^t(f))\mathbf{v}^{m} -  A^{m}(P^{j_1,j_2, \ldots, j_d}(f))\tilde{\mathbf{v}}^{m}) dt_1 dt_2 \ldots dt_d
 \label{42}
 \end{align}

We characterize the regularity of the residual, $\Delta_{(N_i)_{i=1}^d}(A,f)$, by estimates on the terms in the integrand. Consider an arbitrary term, $(A^m(\delta^t(f))\mathbf{v}^m -  A^m(P^{j_1,j_2, \ldots, j_d}(f))\tilde{\mathbf{v}}^m)$, in the series, $\sum_{m=1}^d (A^{m}(\delta^t(f))\mathbf{v}^{m} -  A^{m}(P^{j_1,j_2, \ldots, j_d}(f))\tilde{\mathbf{v}}^{m}) $, in the integrand. Then

\begin{align}
& \lVert A^m(\delta^t(f))\mathbf{v}^m   \rVert_{L^1([0,1]^d)} \leq \lVert  A^m(\delta^t(f)) \rVert_{L^1([0,1]^d)} \lVert \mathbf{v}^m   \rVert_{L^{\infty}([0,1]^d)} \nonumber \\
\end{align}
from H\"olders inequality. Handling the estimates of $\lVert  A^m(\delta^t(f)) \rVert_{L^1([0,1]^d)} $ and $ \lVert \mathbf{v}^m   \rVert_{L^{\infty}([0,1]^d)}$ separately, we have:
\begin{align}
 & \lVert  A^m(\delta^t(f)) \rVert_{L^1([0,1]^d)} \leq \lVert \sup_{t_1,t_2, \ldots, t_d}   A^m(\delta^t(f))\rVert_{L^1([0,1]^d)} \nonumber \\
 & =  \lVert A^m(Q^{j_1,j_2, \ldots, j_d}(f)) \rVert_{L^1([0,1]^d)} \nonumber \\
 & \leq  \sum_{k_1=1_1}^{2^{j_1}} \sum_{k_2=1_2}^{2^{j_2}}  \ldots \sum_{k_d=1_d}^{2^{j_d}}    \lVert A^m(Q^{j_1,j_2, \ldots, j_d}_{k_1,k_2, \ldots, k_d}(f)) \rVert_{L^1( I^{j_1}_{k_1} \times I^{j_2}_{k_2} \times \ldots \times I^{j_d}_{k_d} )} \nonumber \\
 & =    \sum_{k_1=1_1}^{2^{j_1}} \sum_{k_2=1_2}^{2^{j_2}}  \ldots \sum_{k_d=1_d}^{2^{j_d}}  \int_{  \alpha^{\mathbf{j}}_{\mathbf{k}} ( I^{j_1}_{k_1} \times I^{j_2}_{k_2} \times \ldots \times I^{j_d}_{k_d} )} |A^m (y) | dy \nonumber \\
 & \leq \sum_{k_1=1_1}^{2^{j_1}} \sum_{k_2=1_2}^{2^{j_2}}  \ldots \sum_{k_d=1_d}^{2^{j_d}}  \lVert A^m(Q^{j_1,j_2, \ldots, j_d}_{k_1,k_2, \ldots, k_d}(f)) \rVert_{L^{\infty}( I^{j_1}_{k_1} \times I^{j_2}_{k_2} \times \ldots \times I^{j_d}_{k_d} )} \lVert Q^{j_1,j_2, \ldots, j_d}_{k_1,k_2, \ldots, k_d}(f))  \rVert_{L^{\infty}( I^{j_1}_{k_1} \times I^{j_2}_{k_2} \times \ldots \times I^{j_d}_{k_d} )} \nonumber \\
 &  = C \sum_{k_1=1_1}^{2^{j_1}} \sum_{k_2=1_2}^{2^{j_2}}  \ldots \sum_{k_d=1_d}^{2^{j_d}}   2^{-(j_1 + j_2 + \ldots + j_d) \alpha}
\label{eq:45}
\end{align}
where $   \alpha^{\mathbf{j}}_{\mathbf{k}} $ is the wavelet coefficient at the specified scale and location parameters, the second inequality comes from triangle inequality, the third inequality from the $L^1$ estimate on $A^m$- which is finite since  $A^m(Q^{j_1,j_2, \ldots, j_d}_{k_1,k_2, \ldots, k_d}(f)) \in L^{\infty}$ by the maximum principle - and that wavelet coefficients of H\"older functions can be characterized by exponential decay in the scale parameters \cite{meyer1990ondelettes,stephane1999wavelet}. For $ \lVert \mathbf{v}^m   \rVert_{L^{\infty}([0,1]^d)}$, 

\begin{align}
& \lVert \mathbf{v}^m   \rVert_{L^{\infty}([0,1]^d)} \leq  \lVert \sup_{t_1,t_2, \ldots, t_d} \mathbf{v}^m  \rVert_{L^{\infty}([0,1]^d)} \nonumber \\
& =  (d-1) \lVert   Q^{j_1,j_2, \ldots, j_d} \rVert_{L^{\infty}([0,1]^d)} \nonumber \\
& \leq C \sum_{k_1=1_1}^{2^{j_1}} \sum_{k_2=1_2}^{2^{j_2}}  \ldots \sum_{k_d=1_d}^{2^{j_d}} \lVert Q^{j_1,j_2, \ldots, j_d}_{k_1,k_2, \ldots, k_d}(f))   \rVert_{L^{\infty}( I^{j_1}_{k_1} \times I^{j_2}_{k_2} \times \ldots \times I^{j_d}_{k_d} )} \nonumber \\
& = C \sum_{k_1=1_1}^{2^{j_1}} \sum_{k_2=1_2}^{2^{j_2}}  \ldots \sum_{k_d=1_d}^{2^{j_d}} 2^{-(j_1 + j_2 + \ldots + j_d)\alpha}
\label{eq:44}
\end{align}
where again, the last inequality comes from exponential decay in the wavelet scaling parameter and H\"older functions. Combining the estimates on  $\lVert  A^m(\delta^t(f)) \rVert_{L^1([0,1]^d)} $ and $ \lVert \mathbf{v}^m   \rVert_{L^{\infty}([0,1]^d)}$ yields:

\begin{align}
\lVert A^m(\delta^t(f))\mathbf{v}^m   \rVert_{L^1([0,1]^d)} \leq C 2^{-(j_1 + j_2 + \ldots, j_d)2\alpha}
\label{eq:46}
\end{align}

Now we bound the approximation terms, $ \lVert A^m(P^{j_1,j_2, \ldots, j_d}(f))\tilde{\mathbf{v}}^m) \rVert_{L^{1}([0,1]^d)} $,  for the mth term in the series under the integral for $\Delta_{(N_i)_{i=1}^d}(A,f)$ similarly. By H\"older's inequality, 

\begin{align}
& \lVert A^m(P^{j_1,j_2, \ldots, j_d}(f))\tilde{\mathbf{v}}^m) \rVert_{L^{1}([0,1]^d)} \leq \lVert A^m(P^{j_1,j_2, \ldots, j_d}(f)) \rVert_{L^1([0,1]^d)} \lVert \tilde{\mathbf{v}}^m \rVert_{L^{\infty}([0,1]^d) }
\end{align}
where

\begin{align}
&\lVert A^m(P^{j_1,j_2, \ldots, j_d}(f)) \rVert_{L^1([0,1]^d)} \leq \lVert A^m(Q^{j_1,j_2, \ldots, j_d}(f)) \rVert_{L^1([0,1]^d)} \nonumber \\
& \leq C \sum_{k_1=1_1}^{2^{j_1}} \sum_{k_2=1_2}^{2^{j_2}}  \ldots \sum_{k_d=1_d}^{2^{j_d}}   2^{-(j_1 + j_2 + \ldots + j_d) \alpha}
\label{eq:48}
\end{align}
where the last inequality holds from the same argument in (\ref{eq:45}), and 

\begin{align}
& \lVert \tilde{\mathbf{v}}^m \rVert_{L^{\infty}([0,1]^d) } = \lVert  Q^{j_d}Q^{j_{d-1}} \ldots Q^{j_m}(f) Q^{j_{m-1}}(f) \ldots Q^{j_1}(f) \rVert_{L^{\infty}([0,1]^d) } + \nonumber \\
& \lVert  Q^{j_d}Q^{j_{d-1}} \ldots Q^{j_{d-m+1}}(f) Q^{j_{m-1}}(f) Q^{j_m}(f) \ldots Q^{j_1}(f) \rVert_{L^{\infty}([0,1]^d) } \nonumber \\
& + \ldots + \lVert  Q^{j_d}(f)Q^{j_{d-1}}Q^{j_{d-2}} \ldots Q^{j_{d-m-1}}(f)Q^{j_{d-m-2}}(f) \ldots Q^{j_1}(f) \rVert_{L^{\infty}([0,1]^d) }   \nonumber \\
& \leq C \sum_{k_1=1_1}^{2^{j_1}} \sum_{k_2=1_2}^{2^{j_2}}  \ldots \sum_{k_d=1_d}^{2^{j_d}}   2^{-(j_d + j_{d-1} + \ldots + j_m) \alpha}2^{-j_{m-1}\alpha} \ldots 2^{-j_1\alpha} \nonumber \\ 
& + 2^{-(j_d + j_{d-1} + \ldots + j_{d-m+1}) \alpha}2^{-j_{m-1}\alpha} \ldots 2^{-j_1\alpha} + \ldots + 2^{-j_d \alpha} 2^{-(j_{d-1} + \ldots  + j_{d-m-1})\alpha} 2^{-j_{d-m-2}\alpha} \ldots \ldots 2^{-j_1\alpha} \nonumber \\
&  = C \sum_{k_1=1_1}^{2^{j_1}} \sum_{k_2=1_2}^{2^{j_2}}  \ldots \sum_{k_d=1_d}^{2^{j_d}} 2^{-(j_1 + j_2 +  \ldots + j_d) \alpha}
\end{align}
giving us, 

\begin{align}
 \lVert A^m(P^{j_1,j_2, \ldots, j_d}(f))\tilde{\mathbf{v}}^m) \rVert_{L^{1}([0,1]^d)} \leq C \sum_{k_1=1_1}^{2^{j_1}} \sum_{k_2=1_2}^{2^{j_2}}  \ldots \sum_{k_d=1_d}^{2^{j_d}}   2^{-(j_1 + j_2 + \ldots + j_d) 2 \alpha}
 \label{eq:50}
\end{align}

Combining the inequalities in (\ref{eq:46})  and (\ref{eq:50}) gives:

\begin{align}
& \left| \lVert (A^m(\delta^t(f))\mathbf{v}^m \rVert_{L^{1}([0,1]^d)} - \lVert A^m(P^{j_1,j_2, \ldots, j_d}(f))\tilde{\mathbf{v}}^m) \rVert_{L^{1}([0,1]^d)} \right| \leq  C   2^{-(j_1 + j_2 + \ldots + j_d) 2 \alpha} 
\label{eq:51}
\end{align}

The argument above used to provide estimates for the arbitrary term, $ (A^m(\delta^t(f))\mathbf{v}^m -  A^m(P^{j_1,j_2, \ldots, j_d}(f))\tilde{\mathbf{v}}^m)$, in the integrand holds for the terms in the series, $ \sum_{m=2}^{d-1} (A^{m}(\delta^t(f))\mathbf{v}^{m} -  A^{m}(P^{j_1,j_2, \ldots, j_d}(f))\tilde{\mathbf{v}}^{m})$.  The first and last terms are handled slightly different since $\mathbf{v}^1$ and $\mathbf{v}^d$ contain 1 term in their series instead of $d-1$ terms. Since $\mathbf{v}^1 = \tilde{\mathbf{v}}^1$, the first term becomes: 

\begin{align}
(A'(\delta^t(f)) -  A'(P^{j_1,j_2, \ldots, j_d}(f)))\tilde{\mathbf{v}}^1
\end{align}
Then the following $L^1$ estimate is obtained:

\begin{align}
& \lVert (A'(\delta^t(f)) -  A'(P^{j_1,j_2, \ldots, j_d}(f)))\tilde{\mathbf{v}}^1 \rVert_{L^1([0,1]^d)} \leq \lVert (A'(\delta^t(f)) -  A'(P^{j_1,j_2, \ldots, j_d}(f))) \rVert_{L^1([0,1]^d)} \rVert \tilde{\mathbf{v}}^1 \lVert_{L^\infty([0,1]^d)} \nonumber \\
& = \lVert (A'(\delta^t(f)) -  A'(P^{j_1,j_2, \ldots, j_d}(f))) \rVert_{L^1([0,1]^d)} \lVert Q^{j_1,j_2,\ldots,j_d}(f) \rVert_{L^\infty([0,1]^d)} \nonumber \\
& \leq C  \lVert \delta^t(f)) -  P^{j_1,j_2, \ldots, j_d}(f) \rVert_{L^1([0,1]^d)} \lVert Q^{j_1,j_2,\ldots,j_d}(f) \rVert_{L^\infty([0,1]^d)} \nonumber \\
&  \leq C \lVert Q^{j_1,j_2,\ldots,j_d}(f) \rVert_{L^1([0,1]^d)} \lVert Q^{j_1,j_2,\ldots,j_d}(f) \rVert_{L^\infty([0,1]^d)} \nonumber \\
& =  C \sum_{k_1=1_1}^{2^{j_1}} \sum_{k_2=1_2}^{2^{j_2}}  \ldots \sum_{k_d=1_d}^{2^{j_d}} \lVert Q^{j_1,j_2, \ldots, j_d}_{k_1,k_2, \ldots, k_d}(f))   \rVert_{L^{1}( I^{j_1}_{k_1} \times I^{j_2}_{k_2} \times \ldots \times I^{j_d}_{k_d} )} \lVert Q^{j_1,j_2, \ldots, j_d}_{k_1,k_2, \ldots, k_d}(f))   \rVert_{L^{\infty}( I^{j_1}_{k_1} \times I^{j_2}_{k_2} \times \ldots \times I^{j_d}_{k_d} )} \nonumber  \\
& = C \sum_{k_1=1_1}^{2^{j_1}} \sum_{k_2=1_2}^{2^{j_2}}  \ldots \sum_{k_d=1_d}^{2^{j_d}} \int_{\alpha^{\mathbf{j}}_\mathbf{k}(I^j_k \times I^j_k \times \ldots \times I^j_k)} Q(y) dy \lVert Q^{j_1,j_2, \ldots, j_d}_{k_1,k_2, \ldots, k_d}(f))   \rVert_{L^{\infty}( I^{j_1}_{k_1} \times I^{j_2}_{k_2} \times \ldots \times I^{j_d}_{k_d} )} \nonumber \\
& \leq C \sum_{k_1=1_1}^{2^{j_1}} \sum_{k_2=1_2}^{2^{j_2}}  \ldots \sum_{k_d=1_d}^{2^{j_d}} \lVert Q^{j_1,j_2, \ldots, j_d}_{k_1,k_2, \ldots, k_d}(f))   \rVert_{L^{\infty}( I^{j_1}_{k_1} \times I^{j_2}_{k_2} \times \ldots \times I^{j_d}_{k_d} )} \lVert Q^{j_1,j_2, \ldots, j_d}_{k_1,k_2, \ldots, k_d}(f))   \rVert_{L^{\infty}( I^{j_1}_{k_1} \times I^{j_2}_{k_2} \times \ldots \times I^{j_d}_{k_d} )} \nonumber \\
& \leq C  \sum_{k_1=1_1}^{2^{j_1}} \sum_{k_2=1_2}^{2^{j_2}}  \ldots \sum_{k_d=1_d}^{2^{j_d}}  2^{-(j_1 + j_2 + \ldots + j_d)2\alpha}
\end{align}
by application of H\"older's inequality, the property of $A \in C^d$, and the estimates on the wavelet coefficients of H\"older functions previously discussed. So by the previous result, the first term in the series under the integrand of $\Delta_{(N_i)_{i=1}^d}(A,f)$ has the estimate:

\begin{align}
& \left| \lVert (A'(\delta^t(f)) - A'(P^{j_1,j_2, \ldots, j_d}(f)))\tilde{\mathbf{v}}) \rVert_{L^{1}([0,1]^d)} \right| \leq  C   2^{-(j_1 + j_2 + \ldots + j_d) 2 \alpha} 
\label{eq:54}
\end{align}

Now we handle the final term in the series, $(A^d(\delta^t(f)) \mathbf{v}^d -  A^d(P^{j_1,j_2, \ldots, j_d}(f))\tilde{\mathbf{v}}^d$, where $\tilde{\mathbf{v}}^d$ only has 1 term in its series. Since $\mathbf{v} \neq \tilde{\mathbf{v}}^d$, we handle $(A^d(\delta^t(f)) \mathbf{v}^d$ and $A^d(P^{j_1,j_2, \ldots, j_d}(f))\tilde{\mathbf{v}}^d$ separately as we did for the $m$th term; the obtained estimates are the same but the technology used is negligibly different. The $\lVert A^d(\delta^t(f)) \rVert_{L^1([0,1])}$ estimate is the same as (\ref{eq:44}), but $ \lVert \mathbf{v}^d \rVert_L^{\infty}([0,1])$ is slightly different:

\begin{align}
& \lVert \mathbf{v}^d   \rVert_{L^{\infty}([0,1]^d)} \leq  \lVert \sup_{t_1,t_2, \ldots, t_d} \mathbf{v}^d  \rVert_{L^{\infty}([0,1]^d)} \nonumber \\
& =  \lVert   Q^{j_1,j_2, \ldots, j_d} \rVert_{L^{\infty}([0,1]^d)} \nonumber \\
& \leq C \sum_{k_1=1_1}^{2^{j_1}} \sum_{k_2=1_2}^{2^{j_2}}  \ldots \sum_{k_d=1_d}^{2^{j_d}} \lVert Q^{j_1,j_2, \ldots, j_d}_{k_1,k_2, \ldots, k_d}(f))   \rVert_{L^{\infty}( I^{j_1}_{k_1} \times I^{j_2}_{k_2} \times \ldots \times I^{j_d}_{k_d} )} \nonumber \\
& = C \sum_{k_1=1_1}^{2^{j_1}} \sum_{k_2=1_2}^{2^{j_2}}  \ldots \sum_{k_d=1_d}^{2^{j_d}} 2^{-(j_1 + j_2 + \ldots + j_d)\alpha}
\end{align}

Thus, we again have:

\begin{align}
\lVert A^d(\delta^t(f))\mathbf{v}^d  \rVert_{L^1([0,1]^d)} \leq C 2^{-(j_1 + j_2 + \ldots, j_d)2\alpha}
\label{eq:55}
\end{align}
as in (\ref{eq:46}). Similar reasoning applies to the $A^d(P^{j_1,j_2, \ldots, j_d}(f))\tilde{\mathbf{v}}^d$ term. The $ \lVert A^d(P^{j_1,j_2, \ldots, j_d}(f)) \rVert_{L^1([0,1]^d)}$ estimate is the same as (\ref{eq:48}), but for $ \lVert \tilde{\mathbf{v}}^d \rVert_{L^{\infty}([0,1]^d)}$ we have:

\begin{align}
& \lVert \tilde{\mathbf{v}}^d \rVert_{L^{\infty}([0,1]^d)} = \lVert Q^{j_d}(f)Q^{j_{d-1}}(f) \ldots Q^{j_1}(f) \rVert_{L^{\infty}([0,1]^d)} \nonumber \\
& \leq \sum_{k_d=1_d}^{2^{j_d}} Q^{j_d}_{k_d}(f) \sum_{k_{d-1}=1_{d-1}}^{2^{j_{d-1}}} Q^{j_{d-1}}_{k_{d-1}}(f) \ldots \sum_{k_1=1_1}^{2^{j_1}} Q^{j_1}_{k_1}(f) \nonumber \\
& \leq C \sum_{k_d=1_d}^{2^{j_d}} 2^{-j_d\alpha} \sum_{k_{d-1}=1_{d-1}}^{2^{j_{d-1}}} 2^{-j_{d-1}\alpha} \ldots \sum_{k_1=1_1}^{2^{j_1}} 2^{-j_1\alpha} \nonumber \\
& = C  2^{-(j_1 + j_2 + \ldots + j_d)\alpha}
\label{eq:56}
\end{align}
Combining (\ref{eq:55}) and (\ref{eq:56}), the last term of the series in the integrand of $\Delta_{(N_i)_{i=1}^d}(A,f)$ has the estimate:

\begin{align}
& \left| \lVert (A^d(\delta^t(f))\mathbf{v}^d \rVert_{L^{1}([0,1]^d)} - \lVert A^d(P^{j_1,j_2, \ldots, j_d}(f))\tilde{\mathbf{v}}^d) \rVert_{L^{1}([0,1]^d)} \right| \leq  C   2^{-(j_1 + j_2 + \ldots + j_d) 2 \alpha} 
\label{eq:58}
\end{align}

So by  (\ref{eq:51}),  (\ref{eq:54}), and  (\ref{eq:58}) we can express the residual, $\Delta_{(N_i)_{i=1}^d}(A,f)$, defined in (\ref{42}), as a sum of piecewise constant functions:

\begin{align}
 & \Delta_{(N_i)_{i=1}^d}(A,f) = \sum_{j_d = 0}^{N_d} \sum_{j_{d-1} = 0}^{N_{d-1}}  \ldots \sum_{j_1 = 0}^{N_1}  (\sum_{k_1=1_1}^{2^{j_1}} \sum_{k_2=1_2}^{2^{j_2}}  \ldots \sum_{k_d=1_d}^{2^{j_d}}  \eta^{j_1,j_2, \ldots, j_d}_{k_1,k_2, \ldots, k_d} \chi_{I^{j_1}_{k_1} \times I^{j_2}_{k_2}  \times \ldots \times I^{j_d}_{k_d}} (x_1,x_2, \ldots, x_d) ), \nonumber \\
 &|\beta^{j_1,j_2, \ldots, j_d}_{k_1,k_2, \ldots, k_d} | \leq C 2^{-(j_1 + j_2 + \ldots j_d)2\alpha}
 \label{eq:59}
\end{align}

Now we characterize the regularity of $\Delta_{(N_i)_{i=1}^d}(A,f)$ with the following lemma:

\begin{lemma}

The residual, $\Delta_{(N_i)_{i=1}^d}(A,f) = A(f) - \tilde{A}_{(N_i)_{i=0}^d}(f)$, can be writen as:
\begin{align}
& \Delta_{(N_i)_{i=1}^d}(A,f) = \sum_{j_1, \ldots, j_d = 0}^{N_1, \ldots, N_d}  (\sum_{k_1, \ldots, k_d=1}^{2^{j_1}, \ldots, 2^{j_d}} \eta^{(j_i)_{i=1}^d}_{(k_i)_{i=1}^d} \chi_{I^{j_1}_{k_1} \times I^{j_2}_{k_2}  \times \ldots \times I^{j_d}_{k_d}} (x_1,x_2, \ldots, x_d) ), \nonumber \\
 &| \eta^{(j_i)_{i=1}^d}_{(k_i)_{i=1}^d} | \leq C 2^{-(j_1 + j_2 + \ldots j_d)2\alpha}
 \label{eq:60}
 \end{align}
and is an element of $\Lambda_{2\alpha}([0,1]^d)$
 \label{lemma:4}
\end{lemma}

\begin{proof}

To show $\Delta_{(N_i)_{i=1}^d}(A,f) \in \Lambda_{2\alpha}([0,1]^d)$, we need to show 

\begin{align}
|\Delta_{(N_i)_{i=1}^d}(A,f)(\mathbf{x}) - \Delta_{(N_i)_{i=1}^d}(A,f)(\mathbf{\tilde{x}})| \leq C d_d(x'_1,x_1)^{2\alpha}d_d(x'_2, x_2)^{2\alpha} \ldots d_d(x'_d, x_d)^{2\alpha}
\label{eq:61}
\end{align}
where $\mathbf{x} = (x_1, x_2, \ldots, x_d) \in [0,1]^d, \mathbf{\tilde{x}} = (\tilde{x}_1, \tilde{x}_2, \ldots, \tilde{x}_d) \in [0,1]^d$.  To show this, we consider two arbitrary points $\mathbf{x}$ , $\mathbf{\tilde{x}} \in [0,1]^d$ and show that the relation in (\ref{eq:61}) is always satisfied. Suppose we select two arbitrary points $\mathbf{x}, \mathbf{\tilde{x}} \in [0,1]^d$. There are two cases two consider; either $\mathbf{x}, \mathbf{\tilde{x}}$ are in the same dyadic box or not. Let $B^{j_1,j_2, \ldots, j_d}_{k_1,k_2, \ldots, k_d} \coloneq I^{j_1}_{k_1} \times I^{j_2}_{k_2} \times  \ldots \times I^{j_d}_{k_d} = B^{\mathbf{j}}_{\mathbf{k}}$ be the cartesian product of $d$ dyadic intervals for each direction.  Let $\hat{\mathbf{j}} = (\hat{j_1},\hat{j_2}, \ldots, \hat{j_d})$ be the sequence of scales at which $\mathbf{x}, \mathbf{\tilde{x}}$ are no longer contained in the same dyadic box regardless of the location parameters $ \mathbf{k} = (k_1,k_2, \ldots, k_d)$ associated with each scale in $\hat{\mathbf{j}}$ . Define $\mathcal{U}$ to be the set containing combinations of scaling and location parameters such that $\mathbf{x}, \mathbf{\tilde{x}} $ are always contained in the associated dyadic box:

\begin{align}
\mathcal{U} \coloneq \{ ((j_1,j_2, \ldots, j_d), (k_1, k_2, \ldots, k_d)) :  |B^{\mathbf{j}}_{\mathbf{k}} |  >  |B^{\hat{\mathbf{j}}}_{\mathbf{k}} |  \}
\end{align}
Similarly, let $\mathcal{L}$ be the set containing combinations of scaling and location parameters such that $\mathbf{x}, \tilde{\mathbf{x}}$ are never contained in the same box. 

\begin{align}
L \coloneq \{ ((j_1,j_2, \ldots, j_d), (k_1, k_2, \ldots, k_d)) :  |B^{\mathbf{j}}_{\mathbf{k}} |  \leq |B^{\hat{\mathbf{j}}}_{\mathbf{k}} |  \}
\end{align}
Then summing over the scaling and location parameters in $\mathcal{U}$ for $\Delta_{(N_i)_{i=1}^d}(A,f)$, one has:

\begin{align}
& \sum_{ (\mathbf{j}, \mathbf{l})  \in \mathcal{U}}  \gamma^{\mathbf{j}}_{\mathbf{k}} \chi_{B^{\mathbf{j}}_{\mathbf{k}}}(\mathbf{x}) -  \gamma^{\mathbf{{j}}}_{\mathbf{k}} \chi_{B^{\mathbf{j}}_{\mathbf{k}}}(\mathbf{\tilde{x}}) = 0 \nonumber \\
& \leq C d_d(x'_1,x_1)^{2\alpha}d_d(x'_2, x_2)^{2\alpha} \ldots d_d(x'_d, x_d)^{2\alpha}
\label{eq:64}
\end{align}

since $\Delta_{(N_i)_{i=1}^d}(A,f)$ is piecewise constant on dyadic boxes (see \ref{eq:59}).  Now consider the case where $x,x'$ are not contained in same dyadic box. Again, summing over the scaling and location parameters in $\mathcal{L}$ for $\Delta_{(N_i)_{i=1}^d}(A,f)$ one has: 

\begin{align}
 & \sum_{ (\mathbf{j}, \mathbf{l})  \in \mathcal{L}}  \gamma^{\mathbf{j}}_{\mathbf{k}} \chi_{B^{\mathbf{j}}_{\mathbf{k}}}(\mathbf{x}) -  \gamma^{\mathbf{j}}_{\mathbf{k}} \chi_{B^{\mathbf{j}}_{\mathbf{k}}}(\mathbf{\tilde{x}})  \leq C \sum_{\mathbf{j}^{(i)}= \mathbf{j}^{(0)}}^{\mathbf{j}^{(N)}} 2^{-(\sum_{l}^d \mathbf{j}_l^{(i)})2\alpha} \nonumber \\
 & = \sum_{\mathbf{j}^{(i)} = \mathbf{j}^{(0)}}^{\mathbf{j}^{(N)}} 2^{-(j^{(i)}_1 + j^{(i)}_2 + \ldots + j^{(i)}_d)2 \alpha} \nonumber \\
 & \leq C 2^{-(j^{(N)}_1 + j^{(N)}_2 + \ldots + j^{(N)}_d)2 \alpha} \nonumber \\
 & \leq C d_d(x'_1,x_1)^{2\alpha}d_d(x'_2,x_2)^{2\alpha} \ldots d_d(x'_d,x_d)^{2\alpha}
 \label{eq:65}
\end{align}

where we order the scales in $\mathcal{L}$ such that $\mathbf{j}^{(0)} \leq \mathbf{j}^{(1)} \leq  \ldots \leq \mathbf{j}^{(N)}$ indicates $2^{-(\sum_{l}^d \mathbf{j}_l^{(0)})} \leq 2^{-(\sum_{l}^d \mathbf{j}_l^{(1)})} \leq \ldots \leq 2^{-(\sum_{l}^d \mathbf{j}_l^{(N)})}$. The first inequality holds from the estimates on $\Delta_{(N_i)_{i=1}^d}(A,f)$ obtained in (\ref{eq:59}), the second inequality from collapsing the geometric series, the third from the assumption on the scaling parameters contained in $\mathcal{L}$. Thus, combining the previous two estimates in (\ref{eq:64}) and (\ref{eq:65}) one has:

\begin{align}
 & \Delta_{(N_i)_{i=1}^d}(A,f) = \sum_{ (\mathbf{j}, \mathbf{l})  \in \mathcal{L}}  \gamma^{\mathbf{j}}_{\mathbf{k}} \chi_{B^{\mathbf{j}}_{\mathbf{k}}}(\mathbf{x}) -  \gamma^{\mathbf{j}}_{\mathbf{k}} \chi_{B^{\mathbf{j}}_{\mathbf{k}}}(\mathbf{\tilde{x}}) + \sum_{ (\mathbf{j}, \mathbf{l})  \in \mathcal{U}}  \gamma^{\mathbf{j}}_{\mathbf{k}} \chi_{B^{\mathbf{j}}_{\mathbf{k}}}(\mathbf{x}) -  \gamma^{\mathbf{{j}}}_{\mathbf{k}} \chi_{B^{\mathbf{j}}_{\mathbf{k}}}(\mathbf{\tilde{x}}) \nonumber \\
 & \leq C d_d(x'_1,x_1)^{2\alpha}d_d(x'_2,x_2)^{2\alpha} \ldots d_d(x'_d,x_d)^{2\alpha}
\end{align}

Since the choice of $\mathbf{x}, \mathbf{\tilde{x}}$ were arbitrarily chosen, $\Delta_{(N_i)_{i=1}^d}(A,f) \in \Lambda_{2\alpha}([0,1]^d)$ 

\end{proof}

Thus, from lemma \ref{lemma:4}, we can compute the approximation $\tilde{A}_{(N_i)_{i=0}^d}(f) =  \sum_{\beta=1}^d A^{\beta}(P^{j_1,j_2, \ldots, j_d}(f)) \tilde{\mathbf{v}}^{\beta}$ to $A(f)$ such that $\Delta_{(N_i)_{i=1}^d}(A,f) = A(f) - \tilde{A}_{(N_i)_{i=0}^d}(f) \in \Lambda_{2\alpha}([0,1]^d)$.

\end{proof}

\section{Paraproduct Caricatures}

Included in this section are caricatures of d-tensor paraproducts stated in the main theorem.

\begin{corollary}
Suppose $A \in C^{3}(\mathbb{R}), f \in \Lambda_{2\alpha}([0,1]^3)$, then we obtain the following discrete $3$-tensor paraproduct decomposition

\begin{align}
& \tilde{A}_{(N_i)_{i=0}^d}(f) =  \sum_{j_1,j_2,j_3=0}^{N_1,N_2,N_3} A'(P^{j_1,j_2,j_3}(f))Q^{j_1}(f)Q^{j_2}(f)Q^{j_3}(f) +  \nonumber \\
& + A''(P^{j_1,j_2,j_3}(f))[Q^{j_2}[Q^{j_1}(f)Q^{j_3}(f)] + Q^{j_3}[Q^{j_2}(f)Q^{j_1}(f)]] \nonumber \\
&+  A'''(P^{j_1,j_2,j_3}(f))Q^{j_3}Q^{j_2}Q^{j_1}(f)
\end{align}

\end{corollary}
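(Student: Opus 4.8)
This is the $d=3$ instance of Theorem \ref{thm:1}, so the plan is to retrace the construction used in its proof with $d$ fixed to $3$ and to read off the three wavelet operators explicitly. First I would apply the telescoping identity established in the proof of Theorem \ref{thm:1} (with $d=3$) to write $A(f)=\sum_{j_1=0}^{N_1}\sum_{j_2=0}^{N_2}\sum_{j_3=0}^{N_3} A(Q^{j_3}Q^{j_2}Q^{j_1})(f)$. By construction each summand is the third-order mixed finite difference of $A(\delta^{t}(f))$ in $(t_1,t_2,t_3)$ over the corners of $[0,1]^3$ (this is exactly the content of Lemma \ref{lemma:5} fed into the telescoping), which the fundamental theorem of calculus rewrites as $\int_0^1\int_0^1\int_0^1 \partial_{t_1}\partial_{t_2}\partial_{t_3} A(\delta^t(f))\,dt_1\,dt_2\,dt_3$; the hypothesis $A\in C^3$ is what licenses this triple differentiation.

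The substantive step is the chain-rule expansion of the integrand. Writing $g=\delta^t(f)$ and differentiating in $t_3$, then $t_2$, then $t_1$ gives
\begin{align}
\partial_{t_1}\partial_{t_2}\partial_{t_3}A(g)
&= A'''(g)\,(\partial_{t_1}g)(\partial_{t_2}g)(\partial_{t_3}g) \nonumber \\
&\quad + A''(g)\,\big[(\partial_{t_1}\partial_{t_2}g)(\partial_{t_3}g)+(\partial_{t_1}\partial_{t_3}g)(\partial_{t_2}g)+(\partial_{t_2}\partial_{t_3}g)(\partial_{t_1}g)\big] \nonumber \\
&\quad + A'(g)\,\partial_{t_1}\partial_{t_2}\partial_{t_3}g ,
\end{align}
which is the $d=3$ form of the operators $\mathbf{v}^3,\mathbf{v}^2,\mathbf{v}^1$ from the proof of Theorem \ref{thm:1}. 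Since $\delta^{t_i}(f)=P^{j_i}(f)+t_i Q^{j_i}(f)$ makes $g$ multilinear in $(t_1,t_2,t_3)$, the one-, two-, and three-fold specializations of Lemma \ref{lemma:5} identify its $t$-partials with the corresponding scale-difference operators on $f$, which commute across distinct directions; replacing $\partial_{t_i}g\mapsto Q^{j_i}(f)$, $\partial_{t_i}\partial_{t_k}g\mapsto Q^{j_i}Q^{j_k}(f)$ and $\partial_{t_1}\partial_{t_2}\partial_{t_3}g\mapsto Q^{j_3}Q^{j_2}Q^{j_1}(f)$ produces the operators $\tilde{\mathbf{v}}^\alpha$, and freezing each coefficient $A^{(\alpha)}(\delta^t(f))$ at its $t=0$ value $A^{(\alpha)}(P^{j_1,j_2,j_3}(f))$ makes the whole bracket $t$-independent, so the $dt_1\,dt_2\,dt_3$ integral drops out. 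Collecting the three resulting derivative levels under the triple sum is the displayed identity for $\tilde A(f)$.

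The point that requires genuine care is the second-order ($A''$) level: one must reconcile the three partition contributions $(\partial_{t_1}\partial_{t_2}g)(\partial_{t_3}g)+(\partial_{t_1}\partial_{t_3}g)(\partial_{t_2}g)+(\partial_{t_2}\partial_{t_3}g)(\partial_{t_1}g)$ above — under the $Q$-substitution and the commutation $Q^{j_i}Q^{j_k}=Q^{j_k}Q^{j_i}$ — with the two-term expression $Q^{j_2}[Q^{j_1}(f)Q^{j_3}(f)]+Q^{j_3}[Q^{j_2}(f)Q^{j_1}(f)]$ recorded in the statement, and to match each $A^{(\alpha)}$ with the operator carrying the intended number of wavelet factors, since the displayed grouping organizes the mixed directional differences differently from the raw expansion above. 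Apart from this rearrangement, everything is a transcription of the $d$-dimensional argument at $d=3$; in particular the residual bound $\Delta(A,f)\in\Lambda^{2\alpha}$ plays no role here, since the corollary asserts only the closed form of $\tilde A(f)$.
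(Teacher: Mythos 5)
Your route is the same as the paper's: the paper's entire proof of this corollary is the single sentence that it "follows from the same techniques used to prove the general $d$-tensor case by setting $d=3$", and you have simply carried that specialization out in detail (telescoping via Lemma \ref{lemma:4}, the interpolation identity of Lemma \ref{lemma:5}, the fundamental theorem of calculus, the chain-rule expansion of $\partial_{t_1}\partial_{t_2}\partial_{t_3}A(\delta^t(f))$, and the replacement of $t$-partials by $Q^{j_i}$'s with the coefficients frozen at $A^{(\alpha)}(P^{j_1,j_2,j_3}(f))$). So in substance you supply more than the paper does. The one point you flag but do not close is real: the chain rule gives three second-level terms, $(\partial_{t_1}\partial_{t_2}g)(\partial_{t_3}g)+(\partial_{t_1}\partial_{t_3}g)(\partial_{t_2}g)+(\partial_{t_2}\partial_{t_3}g)(\partial_{t_1}g)$, i.e.\ under the substitution $Q^{j_2}Q^{j_1}(f)\,Q^{j_3}(f)+Q^{j_3}Q^{j_1}(f)\,Q^{j_2}(f)+Q^{j_3}Q^{j_2}(f)\,Q^{j_1}(f)$, whereas the corollary's display records only two terms in the $A''$ bracket (consistent with the general theorem's claim that each middle $\tilde{\mathbf{v}}^m$ has $d-1$ terms, which for the second level undercounts the partitions). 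That mismatch lies in the paper's stated formula rather than in your argument; the $Q$'s are projections, not derivations, so $Q^{j_2}[Q^{j_1}(f)Q^{j_3}(f)]$ cannot be expanded by a product rule to absorb the missing term. To make your write-up a complete proof of a correct statement, you should either prove the three-term version you derived, or explicitly note that the displayed two-term bracket should be read as (or corrected to) that three-term expression.
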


\begin{proof}
The proof follows from the same techniques used to prove the general $d$-tensor case by setting $d=3$
\end{proof}

\begin{corollary}
The approximation, $\tilde{A}_{(N_i)_{i=0}^d}(f)$, is real analytic on the image of $f$ and has the following taylor expansion

\begin{align}
\tilde{A}_{(N_i)_{i=0}^d}(f) = \sum_{\beta=1}^d A^{\beta}(P^{j_1,j_2, \ldots, j_d}(f)) \tilde{\mathbf{v}}^{\beta}
\end{align}

where $A^{\beta}(P^{j_1,j_2, \ldots, j_d}(f))$ are the derivatives of A of order $\beta$ evaluated at the average of $f$ and $\tilde{\mathbf{v}}^{\beta}$ are wavelet operators acting in $\beta$ directions. 
\end{corollary}

\begin{proof}
As any real analytic function can be represented by its taylor series, it suffices to show $A$ is real analytic on $f$. Since $  \tilde{A}_{(N_i)_{i=0}^d}(f) = \sum_{\beta=1}^d A^{\beta}(P^{j_1,j_2, \ldots, j_d}(f))\tilde{\mathbf{v}}^{\beta}$ by Theorem \ref{thm:4.1} the power series representation $ \sum_{\beta=1}^d A^{\beta}(P^{j_1,j_2, \ldots, j_d}(f))\tilde{\mathbf{v}}^{\beta}$ is immediate, as $\beta$ represents the derivatives of $A$ and the powers of $\tilde{\mathbf{v}} $
\end{proof}

\section{Cone singularity in complex plane (d=3)}

For the 3D example, we consider a time-varying singularity in the complex plane:

\begin{align}
f(z) = 
\begin{cases}
 (1 - |z|)^{\alpha} & \text{if } |z| < 1 \\
 (1 - \frac{1}{|z|})^{\alpha} & \text{if }  \frac{1}{|z|} < 1
\end{cases}
\end{align}

where $|z| = \sqrt{x^2 + iy^2 + t^2}$ is the usual modulus in the complex plane with a time-dependent parameter and $x,iy \in [-1,1], t \in [0,1]$. $N=256$ equispaced points are sampled in the x and y directions, 128 points are sampled in the time direction. Here, $A(f)$ is the same time-dependent graphic equalization function described in \cite{fasina2025quasilinearization}, where the amplitude of $f$ is differentially modulated at different frequency bands given by the scales, $j_x$ and $j_{iy}$, of the real and imaginary components:

\begin{align}
A_3(f) = sin((\frac{j_x + j_{iy}}{2}<f(z),\psi^{j_x,j_{iy}'}(x,iy)>\psi^{j_x,j_{iy}'}(x,iy))
\end{align}

\begin{figure}[H]
    \centering
    \includegraphics[width=0.99\linewidth]{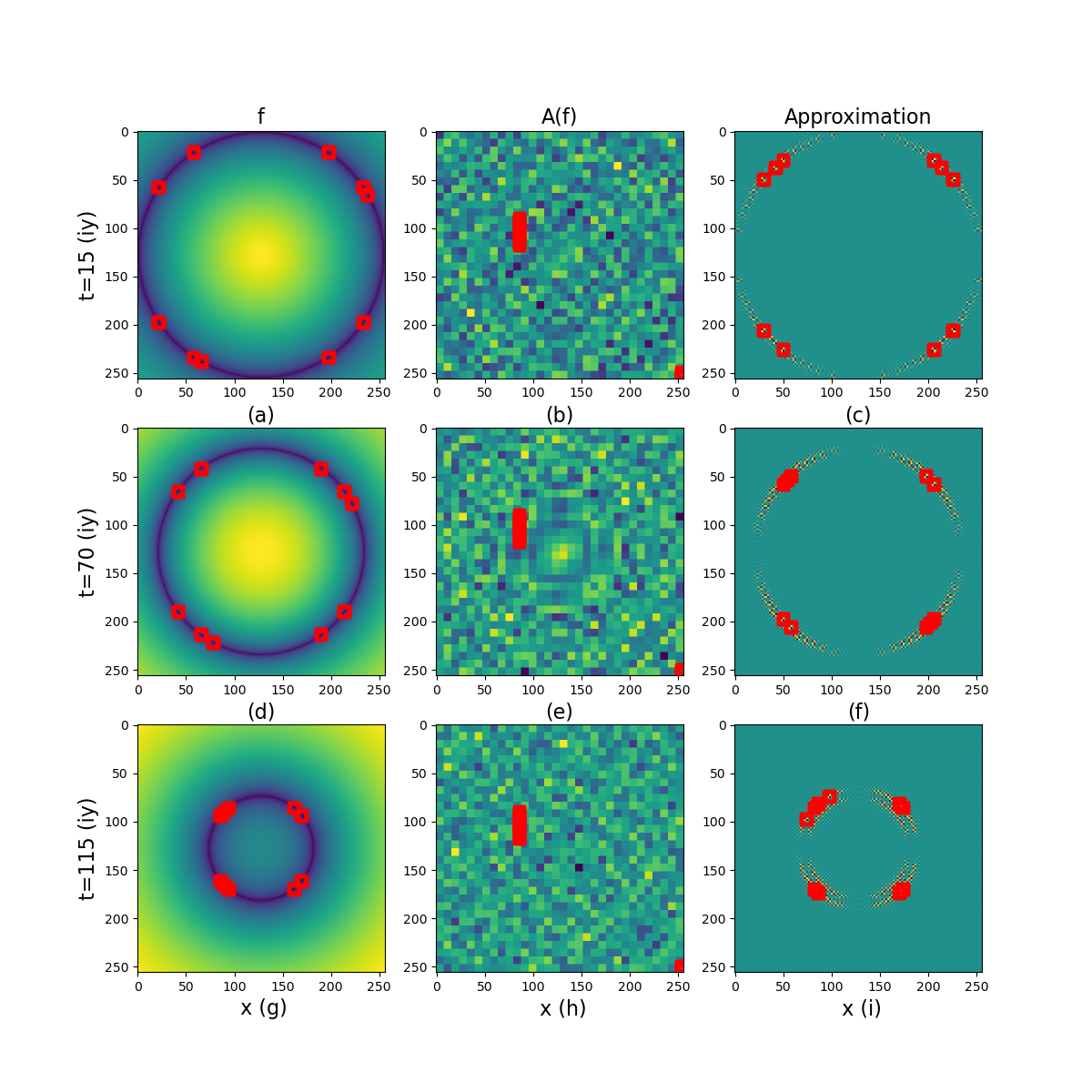}
    \caption{ Cone singularity in complex plane: (a - c) $f,  A(f), \tilde{A}_{(N_i)_{i=0}^d}(f)$ at t=15 (d- f) $f,  A(f), \tilde{A}_{(N_i)_{i=0}^d}(f)$ at t=70, (g-i) $f,  A(f), \tilde{A}_{(N_i)_{i=0}^d}(f)$ at t=115.}
    \label{fig:timevar_complex}
\end{figure}

The left column of Figure \ref{fig:timevar_complex} shows the original singularity progressing towards the center, the middle column the nonlinear distortion, the right column the approximation. The rows represent discrete time points, $t= \{ 15,70, 115 \}$. Qualitatively, one can see the approximation recovers the ring singularity which is conspicuously distorted by $A(f)$.\\

We process all the figures in Figure \ref{fig:timevar_complex} with the tensor haar basis. The red squares are the support of the tensor haar functions with the largest coefficients. This exemplifies the ability of the decomposition to recover the singularity in the presence of smooth non-linear distortions - the singularity is detected as expected for the original function, $f$, goes undetected for the non-linear distortion in the middle column, $A(f)$, and is recovered again by the approximation, $\tilde{A}_{(N_i)_{i=0}^d}(f)$ in the right column.

\bibliographystyle{unsrt}
\bibliography{main}

\end{document}